\DeclareMathOperator*{\argmin}{\arg\!\min}
\newtheorem{definition}{Definition}
\newtheorem{theorem}{Theorem}
\newtheorem{proposition}{Proposition}
\newcommand{\E}{\mathcal{E}}
\newcommand{\Ne}{\ensuremath{\mathcal N}}
\newcommand{\Dk}{\ensuremath{D_{\kappa}}}
\newcommand{\tr}[1]{\ensuremath{\textbf{tr}\left(#1\right)}}
\newcommand{\expec}[1]{\ensuremath{\textbf{E}\left[#1\right]}}
\newcommand{\Sc}{\ensuremath{\mathcal S}}
\newcommand{\Sg}{\ensuremath{S}}
\newcommand{\Hopt}{\ensuremath{\hat{H}}}
\begin{document}
%
\title{Submodular Optimization for Consensus Networks with Noise-Corrupted Leaders}

\author{Erika Mackin and Stacy Patterson

\thanks{*This work was supported in part by NSF grants CNS-1527287 and CNS-1553340.}

\thanks{Erika Mackin and Stacy Patterson are with Department of Computer Science, Rensselaer Polytechnic Institute, Troy, New York 12180, USA. Phone:  518-276-2054. Fax: 518-276-4033. Email:
{\tt\small mackie2@rpi.edu, sep@cs.rpi.edu}}
}

\maketitle

\begin{abstract}
We consider the leader selection problem in a network with consensus dynamics where both  leader and follower agents are subject to stochastic external disturbances.  The performance of the system is quantified by the total steady-state variance of the node states, and the goal is to identify the set of leaders that minimizes this variance.
We first show that this performance measure can be expressed as a submodular set function over the nodes in the network.  We then use this result to analyze the performance of two greedy, polynomial-time algorithms for leader selection, showing that the leader sets produced by the greedy algorithms are within provable bounds of optimal.
\end{abstract}


\begin{IEEEkeywords}
Greedy algorithm, leader-follower system, stochastic system.
\end{IEEEkeywords}

\section{Introduction}

\IEEEPARstart{C}{onsensus} algorithms play an important role in networked systems in applications such as sensor fusion~\cite{xiao2005scheme}, autonomous formation control~\cite{BH06}, clock synchronization~\cite{EKPS04}, and distributed localization~\cite{BH09}.  Due to their importance, much study has been done on the performance of these algorithms, including their convergence rates and their robustness to external disturbances.

An important class of consensus algorithms is leader-follower consensus algorithms.  In leader-follower consensus,  a subset of nodes are \emph{leaders} that dictate the desired state of the network.  The remaining nodes are \emph{followers} that update their states using consensus dynamics.
The performance of a leader-follower consensus system depends both on the topology of the network and the locations of the leader nodes.
A natural question that arises is where to locate the leaders (i.e., at what nodes in the network?) so as to optimize some performance measure.
This problem is known as the \emph{leader selection problem}.

The optimal leader set of a given size $k$ can be found through an exhaustive search of all possible subsets of nodes of size $k$. This approach, however, is not computationally tractable for anything other than small networks or small values of $k$.  Therefore, much research has been done into deriving efficient approximation algorithms and corresponding bounds for leader selection.  Several works have investigated the leader selection problem using convergence rate as a performance measure~\cite{PMB08,RJME09,PS14,CABP14, MP17,YZSC17}. Most notably, the works~\cite{CABP12,CABP14} propose relaxations of the leader selection problem that admit efficient solutions.  While these relaxed formulations perform well in evaluations, there are no guarantees on the optimality of their solutions.

Another performance measure used for leader-follower systems is \emph{network coherence}, which is quantified by the total steady-state variance of the node states.
In systems with \emph{noise-free leaders}, follower nodes are subject to stochastic disturbances, while leader nodes are not.  Heuristic approaches have been proposed to optimize coherence in this setting for general undirected graphs~\cite{PB10,LFJ14,CBP14}.  
Further, analytic solutions have been developed for undirected paths and cycles for an arbitrary number of leaders~\cite{FL16,PMD16,P17}, and for regular trees when the number of leaders is restricted to two~\cite{PYZ17}.  A closed-form expression for coherence in Koch networks with a single noise-free leader was derived in \cite{YZSC17}, and
analysis of the asymptotic scaling of coherence in  1D and 2D directed lattice graphs when leaders are on the boundary was given in~\cite{LFJ12}. Of particular note is the work of Clark et al.~\cite{CBP14}, which showed that the leader selection problem with noise-free leaders can be expressed as an optimization problem over a submodular set function.  As such, a greedy polynomial-time algorithm can find a leader set with performance that is within a provable bound of optimal~\cite{Nemhauser1978}.

Several works have also considered the leader selection problem with \emph{noise-corrupted leaders}, i.e., networks in which both the leader and follower nodes are subject to stochastic disturbances. The work by Fitch and Leonard~\cite{FL16} developed expressions for determining the optimal two noise-corrupted leaders in undirected cycles and paths, while \cite{PYZ17} developed a closed-form expression for the coherence in cycles with two noise-corrupted leaders. The work by Lin et al.~\cite{LFJ14} gives lower and upper bounds on network coherence for an undirected network with noise-corrupted leaders. While this work proposes a similar polynomial-time algorithm to~\cite{PMB08,CBP14}, as well as to the one we study in this work, it does not give any bounds on the quality of the solutions produced by this algorithm. Finally, the recent work by Dhingra et al.~\cite{DCJ16} presents an algorithm for noise-corrupted leader selection in directed networks; however, no guarantee on the optimality of the solution is provided.

In this work, we consider the leader selection problem in undirected graphs with noise-corrupted leaders. We quantify the performance of the system, for a given set of leaders, by the network coherence.
We first show that this performance measure can be recast as a submodular set function over the set of possible leader nodes. We then use this result to
prove optimality bounds for greedy leader selection algorithms similar to those described in~\cite{LFJ14}.  
As far as we are aware, this note is the first work to give optimality bounds on algorithms for leader selection in networks with noise-corrupted leaders.
We note our proof technique was inspired by a  result in~\cite{SSLD15} on optimizing network coherence by adding edges to the network.
The proof in this work was later found to be flawed~\cite{SCL, O17}. We show that, in the problem considered in this work, this proof technique is valid.  Details are given in Section~\ref{analysis.sec}.

The remainder of this note is structured as follows. In Section~\ref{prelim.sec}, we describe our system model and problem formulation, as well as background on submodular functions. Section~\ref{analysis.sec} gives our analysis of the submodularity of the leader selection performance measure.  Section~\ref{algorithms.sec} describes the greedy algorithms for leader selection and presents analysis of the performance of these algorithms, followed by a brief numerical example demonstrating their performance in Section~\ref{numex.sec}. Finally, we conclude in Section~\ref{conclusion.sec}.

\section{Preliminaries} \label{prelim.sec}

\subsection{System Model}
We consider a network of $n$ nodes, modeled by a connected, undirected graph $G = (V,E)$. Each node $i$ has a scalar-valued state $x_i$.
Nodes are either followers or leaders.
The state of a leader is updated as,
\[
\dot{x}_i = - \sum_{j \in \Ne_i} (x_i - x_j)~-~\kappa_i x_i~+~w_i,
\]
where $w_i$ is a zero-mean, white stochastic disturbance and $\kappa_i$ is a real, positive number.
The state of a follower is updated as,
\[
\dot{x}_j = - \sum_{j \in \Ne_j} (x_j - x_k)~+~w_j,
\]
where $w_j$ is again a zero-mean, white stochastic disturbance.

Let $S$ denote the set of leaders.
The dynamics of the system can be written as,
\[
\dot{x} = - (L + \Dk D_S ) x + w,
\]
where $L$ is the Laplacian matrix of the graph, $\Dk$ is a diagonal matrix, with diagonal entries $\kappa_i$, and $D_S$ is a diagonal matrix where the $(i,i)^{th}$ component is 1 if node $i$ is a leader; the $(i,i)^{th}$ component is 0 otherwise.
For simplicity of notation we use $Q_S$ to denote $L + \Dk D_S$.
If the set $S$ is a singleton consisting of the node $v$, we simply write $Q_v$.
Provided $S \neq \emptyset$, the matrix $Q_S$ is positive definite~\cite{RJME09}. 

We note that $Q_S$ can also be interpreted as a grounded Laplacian matrix of a graph $\bar{G}$, which is defined as follows.  Given the graph $G$ and the leader set $S$, $\bar{G}$ is formed by adding a single node $\bar{s}$ to $G$ and adding an edge from each node $i \in S$ to s,  with edge weight $\kappa_i$. All other edges have weight 1.
Let $\bar{L}$ be the corresponding Laplacian.  By removing the $s^{th}$ row and column from $\bar{L}$, we obtain $Q_S$. 
. 

As in~\cite{PB10,LFJ14,CBP14}, we quantify the performance of the system, for a given set of leaders $S$, by the total steady-state variance of $x$, 
\[
H(S) := \lim_{t \rightarrow \infty} \sum_{i=1}^n \expec{x_i^2}.
\]
It is straightforward to show that, for $S \neq \emptyset$~\cite{LFJ14}:
\[
H(S) = \frac{1}{2} \tr{Q_S^{-1}}.
\]
The \emph{$k$-leader selection problem for noise-corrupted leaders} is: given a budget $k$,  identify a set of at most $k$ leaders that minimizes the total steady-state variance, i.e.,
\begin{equation}
\begin{array}{ll}
\text{minimize} & H(S) = \frac{1}{2}\tr{Q_S^{-1}} \\
\text{subject to} & |S| \leq k.
\end{array} \label{Hprob.eq} \tag{LS}
\end{equation}
We denote the minimal value of $H(S)$, over all possible leader sets $S$ with $|S| \leq k$, by $\Hopt$.

A recent work~\cite{LFJ14} proposed an efficient approximation algorithm for (\ref{Hprob.eq}), but did not provide bounds on the performance of solutions
generated by this algorithm. In the remainder of this work, we show that $H(S)$ can be related to a submodular set function and, using this relationship,
we derive bounds on the solutions produced by greedy algorithms similar to the ones proposed in~\cite{LFJ14}.

\subsection{Background}
Our analysis of the leader selection problem is based on theory related to submodular set functions, which are defined as follows.
\begin{definition}[\cite{Nemhauser1978}]
A  function $f:2^V \mapsto\mathbf{R}$, where $V$ is a finite set, is called \emph{submodular} if, for all $A, B \subseteq V$,
\[ 
f(A) + f(B) \geq f(A \cup B) - f(A \cap B).
\]
\end{definition}
Informally, a submodular function exhibits a ``diminishing returns'' property: the incremental benefit of adding an element to a set $S$ is more than the incremental benefit of adding that same element to a superset of $S$.

We  make use of the following definition in our analysis.
\begin{definition}
A set function $f:2^V \mapsto \mathbf{R}$ is called \emph{non-increasing} if for all  $A,B \subseteq V$,
if $A \subseteq B$, then $f(A) \geq f(B)$. The function $f$ is called \emph{non-decreasing} if for all $A, B \subseteq V$,
if $A \subseteq B$, then $f(A) \leq f(B)$.
\end{definition}

We also make use of the following theorem.
\begin{theorem}[\cite{L83}]
\label{submod}
A function $f:2^V \mapsto \mathbf{R}$ is submodular if and only if the derived set functions $f_a:2^{V-\{a\}} \mapsto \mathbf{R}$, defined by,
\begin{equation}
f_a(X) = f(X \cup \{a\})-f(X), \label{submod.eq}
\end{equation}
are non-increasing for all $a \in V$.
\end{theorem}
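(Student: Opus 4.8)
The plan is to establish the two implications of the equivalence separately, in each case combining the definition of submodularity with the identity (\ref{submod.eq}). Throughout I will use the submodular inequality in the form $f(A) + f(B) \geq f(A \cup B) + f(A \cap B)$ for all $A, B \subseteq V$.

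For the forward direction, I would fix $a \in V$ and two sets $A \subseteq B \subseteq V \setminus \{a\}$, and establish the required monotonicity $f_a(A) \geq f_a(B)$ by a single, carefully chosen application of submodularity. The natural choice is to apply the submodular inequality to the pair $A \cup \{a\}$ and $B$. Because $A \subseteq B$ and $a \notin B$, the union of these two sets is $B \cup \{a\}$ and their intersection is exactly $A$. Substituting these into the inequality yields $f(A \cup \{a\}) + f(B) \geq f(B \cup \{a\}) + f(A)$, and rearranging gives $f(A \cup \{a\}) - f(A) \geq f(B \cup \{a\}) - f(B)$, which is precisely $f_a(A) \geq f_a(B)$.

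For the reverse direction, I would assume every $f_a$ is non-increasing and prove submodularity for arbitrary $A, B \subseteq V$. The key idea is a telescoping argument: enumerate the elements of $A \setminus B$ as $a_1, \dots, a_m$, and write both of the differences $f(A \cup B) - f(B)$ and $f(A) - f(A \cap B)$ as sums of successive marginal gains obtained by inserting $a_1, \dots, a_m$ one at a time. The $i$-th term of the first sum is $f_{a_i}\bigl(B \cup \{a_1, \dots, a_{i-1}\}\bigr)$ and the $i$-th term of the second is $f_{a_i}\bigl((A \cap B) \cup \{a_1, \dots, a_{i-1}\}\bigr)$. Since $(A \cap B) \cup \{a_1, \dots, a_{i-1}\} \subseteq B \cup \{a_1, \dots, a_{i-1}\}$ and $a_i$ lies in neither set, the assumed non-increasingness of $f_{a_i}$ bounds each second term from below by the corresponding first term. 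Summing over $i$ gives $f(A) - f(A \cap B) \geq f(A \cup B) - f(B)$, which rearranges into the submodular inequality.

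The main obstacle is organizing the reverse direction cleanly: one must verify that at each insertion step the two intermediate sets differ only by the already-inserted elements, that $a_i$ lies outside both of them (so that $f_{a_i}$ is defined and the monotonicity hypothesis applies), and that the nesting $(A \cap B) \cup \{a_1, \dots, a_{i-1}\} \subseteq B \cup \{a_1, \dots, a_{i-1}\}$ persists for every $i$. The forward direction, by contrast, is essentially a one-line specialization of the definition.
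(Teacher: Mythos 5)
Your proof is correct, but there is nothing in the paper to compare it against: the paper does not prove this statement at all, it simply quotes it from Lov\'asz~\cite{L83}. Your argument is the standard textbook proof of that cited result, and both halves check out. The forward direction is indeed a one-line specialization: applying submodularity to the pair $A \cup \{a\}$ and $B$ works precisely because $A \subseteq B$ and $a \notin B$ force the union to be $B \cup \{a\}$ and the intersection to be exactly $A$. The reverse direction's telescoping over $A \setminus B = \{a_1, \dots, a_m\}$ is also sound, and the bookkeeping you flag as the main obstacle does go through: each $a_i$ lies outside both intermediate sets (it is outside $B$, outside $A \cap B$, and distinct from $a_1, \dots, a_{i-1}$), so each $f_{a_i}$ is defined where you evaluate it and the nesting of the two chains is preserved at every step. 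One point you should make explicit rather than fix silently: you use the submodular inequality in the form $f(A) + f(B) \geq f(A \cup B) + f(A \cap B)$, whereas the paper's Definition~1 literally reads $f(A) + f(B) \geq f(A \cup B) - f(A \cap B)$. The paper's version is a typographical error --- with the minus sign the condition is much weaker than submodularity and the stated equivalence would be false --- so your reading is the correct one, but since the equivalence you prove is with the standard definition, a sentence acknowledging and correcting the sign would make the proof self-contained relative to the paper as written.
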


We note that maximizing a submodular function is an NP-hard problem~\cite{Nemhauser1978}.

\section{Leader Selection and Submodularity} \label{analysis.sec}

We introduce a  function $f(S)$ over  sets of leader nodes, such that maximizing $f(S)$ is equivalent to minimizing $H(S)$. We then prove that this set function $f$ is non-decreasing and submodular, properties which guarantee that leader sets generated by polynomial-time greedy algorithms are within a provable bound of optimal. These algorithms and bounds are presented in Section~\ref{algorithms.sec}. 

The function $f: 2^V \mapsto \mathbf{R}$ is defined as follows:
\begin{equation} \label{fdef.eq}
 f(S) = \left\{ \begin{array}{ll} 
0 & \text{if}~S = \emptyset \\
 C - \tr{ Q_S^{-1}} & \text{otherwise},
\end{array} \right.
\end{equation}
where $C = 2 \left(\max_{ v \in V}  \tr{Q_v^{-1}}\right)$.
Note that a set $\hat{S}$ that maximizes $f$ also minimizes $H$.

\begin{proposition}\label{fnd.prop}
The function $f$ is a non-decreasing function.
\end{proposition}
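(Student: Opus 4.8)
The plan is to reduce the claim to a monotonicity statement about $\tr{Q_S^{-1}}$ under the Loewner (positive-semidefinite) order, and then to dispatch the empty-set boundary case separately. Recall that $Q_S = L + \Dk D_S$. For $A \subseteq B$ the indicator diagonal matrices satisfy $D_B - D_A \succeq 0$ (it is diagonal with nonnegative entries), and since $\Dk$ has positive diagonal entries, $Q_B - Q_A = \Dk(D_B - D_A) \succeq 0$, i.e. $Q_A \preceq Q_B$. So adding leaders can only increase $Q_S$ in the semidefinite order, which is the structural fact driving everything.

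The main step is to show that this inequality is reversed by inversion. When both $A$ and $B$ are nonempty, $Q_A$ and $Q_B$ are positive definite (by the cited fact that $Q_S \succ 0$ for $S \neq \emptyset$), so from $0 \prec Q_A \preceq Q_B$ the order-reversing (operator-monotone-decreasing) property of the matrix inverse gives $Q_B^{-1} \preceq Q_A^{-1}$. Taking traces yields $\tr{Q_B^{-1}} \leq \tr{Q_A^{-1}}$, whence $f(B) = C - \tr{Q_B^{-1}} \geq C - \tr{Q_A^{-1}} = f(A)$, which is exactly the required $f(A) \leq f(B)$.

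It remains to treat $A = \emptyset$, where $f(A) = 0$ and the preceding argument does not apply directly because $Q_{\emptyset} = L$ is only positive semidefinite. Here I must show $f(B) \geq 0$ for every nonempty $B$, i.e. $\tr{Q_B^{-1}} \leq C$. Picking any $v \in B$, the same semidefinite comparison gives $Q_v \preceq Q_B$ with both matrices positive definite, so by the inversion step $\tr{Q_B^{-1}} \leq \tr{Q_v^{-1}} \leq \max_{u \in V} \tr{Q_u^{-1}} = C/2 \leq C$, giving $f(B) \geq C/2 \geq 0 = f(\emptyset)$. This is precisely where the choice $C = 2\max_{v} \tr{Q_v^{-1}}$ is used; a smaller constant would already secure the non-decrease property, but the factor of $2$ yields strict positivity of $f$ on nonempty sets, which is convenient in the later submodularity and greedy-bound arguments.

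The only genuine obstacle is the inversion step: one must justify carefully that $X \mapsto X^{-1}$ reverses the Loewner order on positive definite matrices (for instance via a simultaneous-diagonalization or congruence argument, or by directly establishing the trace inequality $\tr{Q_B^{-1}} \leq \tr{Q_A^{-1}}$), and one must keep track of the requirement of strict positive definiteness. This is exactly why the $A = \emptyset$ case is isolated and handled through a single-leader comparison rather than by invoking the inverse-monotonicity bound on $Q_{\emptyset}$ itself.
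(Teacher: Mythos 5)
Your proof is correct and takes essentially the same approach as the paper's: the same case split (both sets empty, only $A$ empty, both nonempty), the same use of the constant $C$ via a single-leader comparison for the empty-set case, and the same structural fact that $Q_B = Q_A + \Dk D_{B \setminus A}$ with the added term positive semidefinite. The only difference is in the micro-justification of $\tr{Q_B^{-1}} \leq \tr{Q_A^{-1}}$: the paper invokes Weyl's eigenvalue inequality, while you invoke the order-reversing property of matrix inversion on positive definite matrices; these are interchangeable standard facts, so the two proofs are substantively identical.
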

\begin{IEEEproof}
Let $\Sc_1 \subseteq \Sc_2 \subseteq V$.
First, we consider the case where $\Sc_1 = \Sc_2 = \emptyset$.  Then, $f(\Sc_1) = f(\Sc_2) = 0$,
which implies $f(\Sc_1) \leq f(\Sc_2)$.

Next, we consider $\Sc_1 = \emptyset$ and $\Sc_2 \neq \emptyset$.
We now show $f(\Sc_2) \geq 0$, which implies $f(\Sc_1) \leq f(\Sc_2)$. By definition,
\begin{align*}
f(\Sc_2) &= 2 \max_{v\in V} \tr{Q_v^{-1}} - \tr{Q_{\Sc_2}^{-1}}\\
&\geq 2 \max_{v \in \Sc_2} \tr{Q_v^{-1}} - \tr{Q_{\Sc_2}^{-1}}.
\end{align*}
If $|\Sc_2| = 1$, then $f(\Sc_2) \geq 0$ holds trivially. 
Otherwise, let $u = \arg\max_{v \in \Sc_2} \tr{Q_v^{-1}}$ and let $Z= \Sc_2 \setminus \{u\}$. It follows that:
\begin{align*}
Q_{\Sc_2} &= L + \Dk D_{\Sc_2} \\
&= L + \Dk D_Z + \Dk D_u.
\end{align*}
Since $L + \Dk D_Z$ is positive definite and $\Dk D_u$ is positive semidefinite, by Weyl's theorem, 
\[
\lambda_i\left(Q_{u}\right) \leq \lambda_i \left(Q_{Z} + \Dk D_u\right) = \lambda_i \left(Q_{\Sc_2}\right),
\]
for $i=1 \ldots n$.
This implies that $\tr{Q_u^{-1}} \geq \tr{Q_{\Sc_2}^{-1}}$, and thus, $f(\Sc_2) \geq 0$.

Finally, we consider the case where $\Sc_1 \neq \emptyset$. Then,
\begin{align*}
f(\Sc_1) - f(\Sc_2) &= C- \tr{Q_{\Sc_1}^{-1}} - \left( C- \tr{Q_{\Sc_2}^{-1}}\right) \\
&= \tr{Q_{\Sc_2}^{-1}} - \tr{Q_{\Sc_1}^{-1}}.
\end{align*}
Let $Z = \Sc_2 \setminus \Sc_1$ and note that $Q_{\Sc_2} = Q_{\Sc_1} + \Dk D_Z$.
By a similar application of Weyl's theorem as above, it holds that $\tr{Q_{\Sc_2}^{-1}} \leq \tr{Q_{\Sc_1}^{-1}}$,
or equivalently, that $f(\Sc_1) \leq f(\Sc_2)$.
\end{IEEEproof}

\begin{theorem} \label{submod.thm}
The set function $f$, defined in (\ref{fdef.eq}),  is submodular. 
\end{theorem}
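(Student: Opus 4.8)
The plan is to prove submodularity through Theorem~\ref{submod}, so it suffices to show that for each $a \in V$ the derived function $f_a(X) = f(X\cup\{a\}) - f(X)$ is non-increasing. For a nonempty $X$ with $a \notin X$, note that $Q_{X\cup\{a\}} = Q_X + \kappa_a e_a e_a\tp$ (a rank-one diagonal bump, since $\Dk D_{\{a\}} = \kappa_a e_a e_a\tp$), so the marginal gain $f_a(X) = \tr{Q_X^{-1}} - \tr{Q_{X\cup\{a\}}^{-1}}$ is precisely the reduction in total variance obtained by promoting $a$ to a leader, and ``$f_a$ non-increasing'' is exactly the diminishing-returns statement $f_a(X) \ge f_a(Y)$ for $X \subseteq Y$. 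I would reduce this to single-element insertions: it is enough to prove $f_a(X) \ge f_a(X\cup\{b\})$ for all nonempty $X$ and $b \notin X\cup\{a\}$, since any $X \subseteq Y$ is reached by a chain of such insertions, and the case $X=\emptyset$ is handled separately below.

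For the boundary case $X=\emptyset$ I would use the definition $f_a(\emptyset) = C - \tr{Q_a^{-1}}$. Choosing any $b$ in a nonempty set and applying the single-element result gives $f_a(\{b\}) \ge f_a(X)$, so it remains to check $f_a(\emptyset) \ge f_a(\{b\})$. Since $f_a(\{b\}) = \tr{Q_b^{-1}} - \tr{Q_{\{a,b\}}^{-1}} \le \tr{Q_b^{-1}}$ and $C = 2\left(\max_{v\in V}\tr{Q_v^{-1}}\right) \ge \tr{Q_a^{-1}} + \tr{Q_b^{-1}}$, we get $f_a(\emptyset) = C - \tr{Q_a^{-1}} \ge \tr{Q_b^{-1}} \ge f_a(\{b\})$. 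This is exactly where the factor of $2$ in the definition of $C$ earns its keep.

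The core of the argument is a continuous relaxation. For nonempty $X$ and distinct $a,b \notin X$, define $M(s,t) = Q_X + s\,\kappa_a e_a e_a\tp + t\,\kappa_b e_b e_b\tp$ and $\phi(s,t) = -\tr{M(s,t)^{-1}}$. Then $f_a(X) = \phi(1,0)-\phi(0,0)$ and $f_a(X\cup\{b\}) = \phi(1,1)-\phi(0,1)$, so by the fundamental theorem of calculus
\[
f_a(X) - f_a(X\cup\{b\}) = -\int_0^1\!\!\int_0^1 \frac{\partial^2 \phi}{\partial t\,\partial s}\, dt\, ds .
\]
A direct differentiation using $\partial_s M^{-1} = -M^{-1}(\kappa_a e_a e_a\tp)M^{-1}$ gives $\partial_s \phi = \kappa_a\, e_a\tp M^{-2} e_a$, and differentiating once more in $t$ yields
\[
\frac{\partial^2 \phi}{\partial t\,\partial s} = -2\,\kappa_a \kappa_b \left(M^{-1}\right)_{ab}\left(M^{-2}\right)_{ab}.
\]
Thus the whole claim reduces to showing this mixed partial is nonpositive for all $s,t\in[0,1]$.

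The sign of that mixed partial is the main obstacle, and it is resolved by the matrix structure. For every $s,t\in[0,1]$, $M(s,t)$ is symmetric and positive definite (it is $Q_X$, which is positive definite since $X\neq\emptyset$, plus positive-semidefinite diagonal terms), while all its off-diagonal entries coincide with those of $L$ and are therefore nonpositive; hence $M(s,t)$ is a symmetric M-matrix, so $M^{-1}$ is entrywise nonnegative, and consequently so is $M^{-2} = M^{-1}M^{-1}$. Therefore $\left(M^{-1}\right)_{ab}\ge 0$ and $\left(M^{-2}\right)_{ab}\ge 0$, giving $\partial^2\phi/\partial t\,\partial s \le 0$ and hence $f_a(X)\ge f_a(X\cup\{b\})$. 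This entrywise nonnegativity of the inverse is the structural fact that legitimizes the continuous-relaxation technique in our setting, in contrast to the edge-addition problem of~\cite{SSLD15} where the analogous perturbation does not preserve the M-matrix sign pattern and the argument breaks down.
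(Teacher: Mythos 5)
Your proof is correct, and although it shares the paper's overall skeleton---reduce submodularity to monotonicity of the derived functions $f_a$ via Theorem~\ref{submod}, then establish that monotonicity by differentiating along a continuous interpolation and controlling the sign of the derivative through \emph{entrywise} (rather than spectral) properties of the inverses---the technical core is executed along a genuinely different route. The paper interpolates along the one-parameter path $Q(t)=Q_{\Sc_1}+t(Q_{\Sc_2}-Q_{\Sc_1})$ between an arbitrary nested pair, differentiates once, and must then control the sign of $\tr{\left[(Q(t)+\Dk D_a)^{-2}-Q(t)^{-2}\right](Q_{\Sc_2}-Q_{\Sc_1})}$; it does so by a Sherman--Morrison factorization of the difference of inverse squares into products of entrywise-signed matrices, which is precisely where the flawed negative-semidefinite-squares reasoning of~\cite{SSLD15} is repaired. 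You instead first reduce to single-element insertions ($X$ versus $X\cup\{b\}$), which is legitimate since any nested pair of nonempty sets is joined by a chain of such insertions, and then use a two-parameter interpolation $M(s,t)$ together with a mixed second partial, arriving at the closed form $\partial_t\partial_s\phi=-2\kappa_a\kappa_b\left(M^{-1}\right)_{ab}\left(M^{-2}\right)_{ab}$, whose nonpositivity is immediate once $M(s,t)$ is recognized as a symmetric M-matrix (so $M^{-1}$, and hence $M^{-2}=M^{-1}M^{-1}$, is entrywise nonnegative); no Sherman--Morrison manipulation is needed. What your route buys is a cleaner, more symmetric sign argument exploiting the rank-one structure of \emph{both} perturbations at once, making manifest exactly why the technique that fails for edge addition in~\cite{SSLD15} succeeds here; what the paper's route buys is handling arbitrary nested pairs in a single pass, without the chain reduction. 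Your treatment of the $\Sc_1=\emptyset$ boundary case (via $C\geq\tr{Q_a^{-1}}+\tr{Q_b^{-1}}$ and $f_a(\{b\})\leq\tr{Q_b^{-1}}$, then chaining down from $\{b\}\subseteq X$) is a minor rearrangement of the paper's argument but equally valid, and both rely on the same factor of $2$ built into the constant $C$.
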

\begin{IEEEproof}
To prove this theorem, we first define the set function ${f_a : 2^{V \setminus \{a\} } \mapsto \mathbf{R}}$,
\begin{equation}
f_a(\Sc) =  f( \Sc \cup \{a\}) - f(\Sc), \label{derived.eq}
\end{equation}
and show that it is monotone decreasing.

Let $\Sc_1 \subseteq \Sc_2 \subseteq V \setminus \{a\}$. 
We first consider the case where $\Sc_1 \neq \emptyset$ and $\Sc_2 \neq \emptyset$.
The proof of this case follows the same structure used in~\cite{SSLD15}. 
In this case, (\ref{derived.eq}) is equivalent to:
\begin{align*}
f_a(\Sc) &= C - \tr{Q^{-1}_{\Sc \cup \{a\}}} - \left(C -  \tr{Q^{-1}_{\Sc}}\right)\\
&=- \tr{(Q_{\Sc} + \Dk D_{a})^{-1}}+ \tr{Q^{-1}_{\Sc}},
\end{align*}
where $D_a$ denotes the diagonal matrix with $D_a(a,a) = 1$ and all other entries equal to 0.

We define the function $Q(t)$, for $t \in [0,1]$, as,
\begin{align*}
 Q(t) &= Q_{\Sc_1} + t(Q_{\Sc_2}-Q_{\Sc_1}). 
 \end{align*}
 Note that $Q(0) = Q_{\Sc_1}$ and $Q(1) = Q_{\Sc_2}$. Let
\begin{align}
\hat{f}_{a} (Q(t)) = -\tr{(Q(t)+ \Dk D_{a})^{-1}}+\tr{Q(t)^{-1}}. \label{fahat.eq}
\end{align}

We next take the derivative of $\hat{f}_a$ with respect to $t$,
\begin{align}
&\frac{d}{dt}\hat{f}_{a}(Q(t)) \nonumber  \\
& =  \frac{d}{dt}  \left( -\tr{ (Q(t)+ \Dk D_a)^{-1} }+\tr{Q(t)^{-1} } \right) \label{deriv1}\\
& =\tr{(Q(t)+ \Dk D_a)^{-1} (Q_{\Sc_2}-Q_{\Sc_1}) (Q(t)+ \Dk D_a)^{-1} } \nonumber \\
&~~~~~~~~~~-\tr{ Q(t)^{-1}(Q_{\Sc_2}-Q_{\Sc_1})Q(t)^{-1} } \label{deriv2} \\
&=\textbf{tr}\Big( \left[ (Q(t)+ \Dk D_a)^{-2}  -Q(t)^{-2}\right] (Q_{\Sc_2}-Q_{\Sc_1})\Big), \label{deriv3}
\end{align}
where (\ref{deriv2}) is obtained from (\ref{deriv1}) by applying the matrix derivative formula: 
\[
\frac{d}{dt} \tr{ Q(t)^{-1}} = -\tr{Q(t)^{-1}\frac{d}{dt}(Q(t))Q(t)^{-1}},
\]
and (\ref{deriv3}) is obtained from (\ref{deriv2}) by the cyclic property of the trace. 

We now show that (\ref{deriv3}) is non-positive. 
We define $d_a$ as the vector of all zeros except the $a^{th}$ component, which has value $\sqrt{\kappa_a}$.
Let 
\[X=\frac{Q(t)^{-1}\Dk D_a Q(t)^{-1}}{1 + d_a^T Q(t)^{-1} d_a}. \]
Using the Sherman-Morrison formula, we rewrite the first factor in (\ref{deriv3}) as: 
\begin{align}
&(Q(t)+ \Dk D_a)^{-2}  -Q(t)^{-2} = \Big( Q(t)^{-1} - X \Big)^2 - \Big(Q(t)^{-1}\Big)^2 \nonumber  \\
&~~~~~~~~~=-Q(t)^{-1}X - X Q(t)^{-1} + X^2 \nonumber \\
&~~~~~~~~~= - Q(t)^{-1}X  -X(Q(t)^{-1}-X)  \nonumber \\
&~~~~~~~~~= - Q(t)^{-1}X - X(Q(t)+ \Dk D_a)^{-1}. \label{inv2.eq}
\end{align}

Note that $Q(t)$ and $(Q(t) + \Dk D_a)$ are both grounded Laplacians, and therefore their inverses $Q(t)^{-1}$ and ${(Q(t) + \Dk D_a)^{-1}}$ are element-wise positive for all $t$ and $a$~\cite{M93}. 
From this, we can conclude that $X$ is also element-wise positive, since the numerator is clearly element-wise positive and the denominator is the positive scalar $1 + Q(t)^{-1}_{(a,a)}$. We can then see that $-Q(t)X$ and $(-X)(Q(t) + \Dk D_a)^{-1})$ are both element-wise negative matrices, and therefore, $(Q(t)+ \Dk D_a)^{-2}  -Q(t)^{-2}$ is also element-wise negative.

We also note that 
\begin{align*}
Q_{\Sc_2}-Q_{\Sc_1} &= \Dk D_{\Sc_2} - \Dk D_{\Sc_1},
\end{align*}
is a diagonal matrix where the $(i,i)^{th}$ component is 1 if ${i \in S_2 \setminus S_1}$ and $0$ otherwise.
Therefore,  
\begin{align}
\left[ (Q(t)+ \Dk D_a)^{-2}  -Q(t)^{-2}\right] (Q_{\Sc_2}-Q_{\Sc_1})  \label{negMat.eq}
\end{align}
is a matrix with columns that are either all zeros or correspond to the columns of $(Q(t)+ \Dk D_a)^{-2}  -Q(t)^{-2}$ with indices in $S_2 \setminus S_1$. The matrix (\ref{negMat.eq}) is therefore non-positive, as is its trace.  It follows that $\frac{d}{dt}\hat{f}_{a}(Q(t))$ is non-positive. 

Consider the following equality,
\begin{equation*}
\hat{f}_a(Q(1)) = \hat{f}_a(Q(0))+\int_0^1  \textstyle \frac{d}{dt}\hat{f}_a(Q(t))dt.
\end{equation*}
Since $\hat{f}_a(Q(1)) = f_{a}(\Sc_2)$ and $\hat{f}_a(Q(0)) = f_{a}(\Sc_1)$, and since, as shown above,  $\frac{d}{dt}\hat{f_a}(Q(t))dt$ is non-positive, 
we have $ f_{a}(\Sc_1) \geq f_{a}(\Sc_2)$.  Therefore $f_{a}$ is non-increasing over $\Sc \subseteq V$, $\Sc \neq \emptyset$.

If $\Sc_1 = \Sc_2 = \emptyset$, then  $f_a(\Sc_1) = f_a(\Sc_2)$, and so it also holds that $f_{a}$ is non-increasing over $S = \emptyset$.

Finally, we consider $\Sc_1 = \emptyset$ and $\Sc_2 \neq \emptyset$.
Then,
\begin{align*}
&f_a(\Sc_1) - f_a(\Sc_2) = f(\{a\}) - \left(f(\Sc_2 \cup \{a\}) - f(\Sc_2) \right)) \\
&= \left(C - \tr{Q_a^{-1}}\right) - \big(C - \tr{Q_{\Sc_2 \cup \{a\}}^{-1}} - \left(C - \tr{ Q^{-1}_{\Sc_2}} \right) \big) \\
&= \left(C - \tr{Q_a^{-1}}\right)  - \left(\tr{ Q^{-1}_{\Sc_2}} - \tr{Q_{\Sc_2 \cup \{a\}}^{-1}}\right). 
\end{align*}
With $C= 2 \max_{v \in V} \tr{Q_v^{-1}}$, we have,
\begin{align*}
 2 \max_{v \in V} \tr{Q_v^{-1}} - \tr{Q_a^{-1}} &\geq \max_{v \in V} \tr{Q_v^{-1}}  \\
&\geq \max_{u \in \Sc_2} \tr{Q_u^{-1}}.
\end{align*}
By Proposition~\ref{fnd.prop}, $\max_{u \in \Sc_2} \tr{Q_u^{-1}} \geq \tr{Q_{\Sc_2}^{-1}}$.
Therefore, $f_a(\Sc_1) - f_a(\Sc_2) \geq 0$.

Thus, $f_{a}$ is monotone decreasing over all subsets of $V$, and by Theorem~\ref{submod}, $f$ is submodular. 
\end{IEEEproof}

\subsection*{A note on the correctness of the proof method.}
A similar method to the above proof was first presented in~\cite{SSLD15}  to show that coherence, as a function of the set of edges that can be added to a noisy consensus network, can be captured by a submodular function. This same technique was later used to show the submodularity of functions in other network design problems such as: adding edges in networks with noisy consensus dynamics \cite{SSLD15}, adding edges in networks with noisy consensus dynamics with stubborn agents \cite{MP17}, and sensor and actuator placement for optimal controllability \cite{SCL16}. Unfortunately, the proofs in these works relied on a faulty assumption about the relationship between negative semidefinite matrices and their squares~\cite{SCL}.
And, in fact, it has since been shown that many of these set functions are not submodular~\cite{O17}.


As an illustration, we consider the problem of selecting edges to add to a noisy consensus network to optimize its coherence~\cite{SSLD15}. The system dynamics are: 
\begin{align*}
\dot{x}=-(L+L_\E)x+w,
\end{align*}
where $w$ is a vector of zero-mean, white noise processes, $\E$ is the set of edges added, and $L_\E$ is the corresponding Laplacian. The coherence of the network is:
 \[ H(\E) = \frac{1}{2}\tr{(L+L_{\E})^{\dagger}}, \]
 where $L^{\dagger}$ denotes the pseudo-inverse of the Laplacian,
 and the set function of interest is: 
 \[ f(\E) = \tr{L^{\dagger}}-\tr{(L+L_{\E})^{\dagger}}, \]
 corresponding to (\ref{fdef.eq}) in our problem setting.
 The functions $f_a$ and $\hat{f}_a$ are defined similarly to those in equations (\ref{derived.eq}) and (\ref{fahat.eq}).
The corresponding statement to  (\ref{deriv3}) is
\begin{align}
\textbf{tr}\Big( \left[ (Q(t)+ L_a)^{-2}  -Q(t)^{-2}\right] (L_{\E_2}-L_{\E_1})\Big) \leq 0, \label{oldTrace}
\end{align}
where  ${Q(t)=L +L_{\E_1} + t(L_{\E_2}-L_{\E_1}).}$

In the proof in~\cite{SSLD15}, the authors argue that because ${(Q(t)+ L_a)^{-1} -Q(t)^{-1}}$ is negative semidefinite, ${(Q(t)+ L_a)^{-2}  -Q(t)^{-2}}$ is negative semidefinite as well.
They use this assumption to show that the inequality (\ref{oldTrace}) holds.  As shown in~\cite{O17}, this relationship does not hold in general, nor does it hold in the problems in~\cite{MP17, SSLD15, SCL16}. In fact, this relationship does not necessarily hold in (\ref{deriv3}). Our proof does not rely on this assumption, but rather we base our argument on the element-wise positivity and negativity of the two factors in (\ref{deriv3}).

\section{Algorithms} \label{algorithms.sec}
In this section, we describe and analyze two greedy algorithms for the $k$-leader selection problem with noise-corrupted leaders.
These algorithms are problem-specific variations of algorithms presented in~\cite{Nemhauser1978} for maximizing a general 
submodular function.

\subsection{Algorithm Descriptions}
The first algorithm, the Greedy Algorithm, is given in Algorithm~\ref{greedy.alg}.
The algorithm is initialized with an empty leader set.  It first selects the single node $v$ that minimizes $\tr{Q_v^{-1}}$ and adds that to the leader set $S$.
Then, in each iteration, it selects the node $v$ that gives the smallest value of 
$\tr{Q_{\Sg \cup \{v\}}^{-1}}$ and adds that node to $S$. This continues until either $|S|= k$ or no node whose addition would further decrease $H(S)$ is found.

The second algorithm, the Swap Algorithm, is given in Algorithm~\ref{swap.alg}.
The Swap Algorithm takes an arbitrarily-selected leader set $S$ of size $k$ as input. 
To improve the performance of the leader set, a possible swap is looked for by repeatedly exchanging a single leader with a single follower. Call this potential leader set $S'$. 
If $\tr{Q_{S'}^{-1}} < \tr{Q_{S}^{-1}}$, then $S'$ becomes the new leader set. This process is repeated until no possible swap is found that decreases the value of $\tr{Q_{S}^{-1}}$.

Algorithm \ref{greedy.alg} is equivalent to the One-Leader-at-a-Time Algorithm in \cite{LFJ14}. 
Algorithm \ref{swap.alg} is similar to the Swap Algorithm in \cite{LFJ14}, with one difference.  In~\cite{LFJ14}, the input to the Swap Algorithm is the output from the One-Leader-at-a-Time Algorithm rather than an arbitrary leader set as in Algorithm~\ref{swap.alg}.  

As shown in~\cite{LFJ14}, the Greedy Algorithm requires $O(n^3)$ operations.
Further, \cite{LFJ14} shows that, in the Swap Algorithm, to evaluate the benefit of a potential swap requires $O(n)$ operations, and, once a beneficial swap has been identified, $O(n^2)$ operations are required to compute the new $Q_S^{-1}$.  
It has been shown that, for some submodular functions, such a swap algorithm may take an exponential number of iterations~\cite{Nemhauser1978}. It is an open question whether this worst-case time complexity holds for Algorithm~\ref{swap.alg}.

\begin{algorithm}[t]
\SetKwInOut{Input}{Input}
\SetKwInOut{Output}{Output}
\SetAlgoNoLine
\Input{$G=(V,E)$, $\kappa$, \text{maximum number of leaders $k$}}
\Output{Set of leader nodes $\Sg$} 
\KwSty{Initialize:} $\Sg \gets \emptyset$, $i \gets 0$ \\
$v \gets \argmin_{u \in V} \tr{Q_v^{-1}}$ \\
$\Sg \gets S \cup \{v\}$ \;
\For{$i=2 \ldots k$}{
	$v \gets \argmin_{u \in V\setminus \Sg} \tr{Q_{{\Sg} \cup \{u\}}^{-1}}$\\
	\eIf{$\tr{Q_{\Sg \cup \{v\}}^{-1}} < \tr{Q_\Sg^{-1}}$}{
		$\Sg \gets \Sg \cup \{v\}$
	}
	{
		return $\Sg$
	}
}
return $\Sg$
\caption{Greedy Algorithm for $k$-leader selection.} \label{greedy.alg}
\end{algorithm}

\begin{algorithm}[t] 
\SetKwInOut{Input}{Input}
\SetKwInOut{Output}{Output}
\SetAlgoNoLine
\Input{$G=(V,E)$, $\kappa$, \text{arbitrary leader set} $S$, $|S|=k$ }
\Output{Set of leader nodes $\Sg$} 
\KwSty{Initialize:} $T \gets V \setminus S$, $n \gets |V|$, $decreased \gets true$ \\
\While {decreased} {
	\textbf{label:}  \For{$i \in S, j \in T$}{
		\If{$\tr{Q_{S \cup \{j\} \setminus \{i\}}^{-1}} < \tr{Q_{S}^{-1}}$}{
			$S \gets S \cup \{j\} \setminus \{i\}$\\
			$T \gets T \cup \{i\} \setminus \{j\}$\\
			\textbf{break label}
		}
	}
	$decreased = false$
}
return $S$
\caption{Swap Algorithm for $k$-leader selection.} \label{swap.alg}
\end{algorithm}

\subsection{Performance Bounds}

We now present analysis of the performance of the leader selection algorithms.
\begin{theorem}
\label{greedybound.thm}
For a graph $G=(V,E)$ and number of leaders $k$, let $S_g$  be the leader set returned by Algorithm \ref{greedy.alg}, and let 
$\Hopt = \min_{S, |S| \leq k}H(S)$.  Then:
\begin{enumerate}
\item If $|S_g|<k$, then $H(S_g)=\Hopt$, and $S_g$ is an optimal leader set.
\item If $|S_g|=k$, then 
\begin{align}
H(S_g) \leq \left(1-\frac{1}{e}\right)\Hopt  + \frac{B}{e}, \label{greedybound.eq}
\end{align} 
where $B = \max_{ v \in V}  \tr{Q_v^{-1}}$.
\end{enumerate}
\end{theorem}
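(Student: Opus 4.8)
The plan is to reduce both claims to the classical greedy approximation guarantee for monotone submodular maximization in~\cite{Nemhauser1978}, leveraging the facts already established that $f$ is non-decreasing (Proposition~\ref{fnd.prop}) and submodular (Theorem~\ref{submod.thm}), together with $f(\emptyset)=0$ and $f(\Sc)\ge 0$. Throughout I would exploit the affine relationship $f(S)=C-\tr{Q_S^{-1}}=C-2H(S)$ for $S\neq\emptyset$, and record that $C=2\max_{v\in V}\tr{Q_v^{-1}}=2B$. Since maximizing $f$ is equivalent to minimizing $H$, the $f$-optimal set $S^*$ (which is nonempty whenever $k\ge 1$) satisfies $H(S^*)=\Hopt$ and hence $f(S^*)=C-2\Hopt$. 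I would let $S_g$ denote the greedy output and split on whether the early-termination guard in Algorithm~\ref{greedy.alg} fires.

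For claim (1), where $|S_g|<k$, I would argue that early termination forces every marginal gain to vanish. The loop exits before reaching $k$ only when the best candidate $v=\argmin_{u\in V\setminus S_g}\tr{Q_{S_g\cup\{u\}}^{-1}}$ fails to decrease the trace, i.e. $\tr{Q_{S_g\cup\{v\}}^{-1}}\ge\tr{Q_{S_g}^{-1}}$; because $v$ is the minimizer, this inequality holds for \emph{every} $u\notin S_g$, so $f_u(S_g)\le 0$ for all such $u$. Monotonicity of $f$ gives the reverse inequality $f_u(S_g)\ge 0$, hence $f_u(S_g)=0$ for all $u\notin S_g$. I would then apply the standard submodular telescoping bound together with monotonicity: for any feasible $S$,
\[
f(S)\le f(S\cup S_g)\le f(S_g)+\sum_{u\in S\setminus S_g}f_u(S_g)=f(S_g).
\]
Taking $S=S^*$ yields $f(S_g)\ge f(S^*)$, and since $S^*$ is $f$-optimal this is an equality; translating back through $f=C-2H$ gives $H(S_g)=\Hopt$.

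For claim (2), where $|S_g|=k$, the guard never fired, so $S_g$ coincides with the output of the unmodified greedy procedure that appends the maximizer of the marginal gain at each of the $k$ steps. The Nemhauser--Wolsey--Fisher theorem then gives $f(S_g)\ge(1-1/e)\,f(S^*)$. Substituting $f(S_g)=C-2H(S_g)$ and $f(S^*)=C-2\Hopt$ and solving for $H(S_g)$ gives
\[
H(S_g)\le\Bigl(1-\tfrac{1}{e}\Bigr)\Hopt+\frac{C}{2e},
\]
and replacing $C=2B$ produces $\frac{C}{2e}=\frac{B}{e}$, which is exactly~(\ref{greedybound.eq}).

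The only delicate point is the early-termination case: one must be careful that the guard compares against the \emph{best} remaining candidate, so that a single failed improvement certifies that \emph{no} candidate improves, and that it is the non-decreasing property (rather than submodularity) that pins the vanishing marginal gains down to exactly zero. Everything else is the textbook greedy analysis composed with the change of variables $f=C-2H$; the additive $\frac{B}{e}$ term, in place of a pure multiplicative ratio, is an artifact of the constant offset $C$ needed to turn the minimization of $H$ into the maximization of a non-negative $f$, which is precisely where tracking the exact value $C=2B$ matters.
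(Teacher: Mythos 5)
Your proposal is correct and follows essentially the same route as the paper: both recast Algorithm~\ref{greedy.alg} as greedy maximization of the non-decreasing, submodular function $f$, invoke the Nemhauser--Wolsey--Fisher guarantees, and translate back through the affine relation $f(S) = C - 2H(S)$ with $C = 2B$. The only difference is that you prove the early-termination case (claim 1) directly via the vanishing-marginal-gains and telescoping argument, whereas the paper simply cites Proposition 4.2 of~\cite{Nemhauser1978}; your inlined argument is a valid proof of that cited fact, so the two proofs coincide in substance.
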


\begin{proof}
In Algorithm~\ref{greedy.alg}, the first leader that is selected is the node $v$ that minimizes $\tr{Q_v^{-1}}$.  This is equivalent to selecting the node $v = \arg \max_{u \in V} f(\{u\})$.
In each iteration of the algorithm, an additional leader is added to the set $S_g$ such that $H(S_g)$ is minimized, or equivalently, such that $f(S_g)$ is maximized. 
Algorithm~\ref{greedy.alg} is thus equivalent to a greedy algorithm that maximizes $f$.  
By Proposition \ref{fnd.prop} and Theorem \ref{submod.thm}, the function $f$ is non-decreasing and submodular, respectively.
Thus, by Proposition 4.2 in~\cite{Nemhauser1978}, if the greedy algorithm terminates with $|S_g| < k$, the set $S$ is optimal for $f$ and, therefore, also for $H$, 
which proves property 1.
By Theorem 4.2 in \cite{Nemhauser1978}, if the greedy algorithm terminates with $|S_g| = k$, then 
\begin{align}
f(S_g) \geq \left(1 - \left(\frac{k-1}{k}\right)^k\right)f(\hat{S}) \geq \left(1 - \frac{1}{e}\right)f(\hat{S}), \label{bound1}
\end{align}
where $\hat{S}$ is any leader set such that $\frac{1}{2}\tr{Q_{\hat{S}}^{-1}}=\Hopt$.
By applying the relationship between $f$ and $H$ to (\ref{bound1}), we obtain the bound (\ref{greedybound.eq}) in property 2.
\end{proof}
It has been shown that the bound (\ref{bound1}) is tight, meaning there is some submodular set function that achieves the worst case bound.
Futher, for a general submodular set function, this is the best achievable bound for \emph{any} polynomial-time algorithm, unless $P = NP$ \cite{CBP14}.  It remains an open question
whether this bound is tight for the specific function $f$ in  (\ref{fdef.eq}).

\begin{theorem}
\label{swapbound.thm}
For a graph $G=(V,E)$ and number of leaders $k$,  let $S_s$ be the leader set returned by Algorithm \ref{swap.alg},
 and let $\Hopt = \min_{S, |S| \leq k}H(S)$. Then,
\begin{align}
H(S_s) \leq \left(1-\frac{k-1}{2k-1}\right)  \Hopt + B\left(\frac{k-1}{2k-1}\right),
\label{swapbound.eq}
\end{align}
where $B=\max_{ v \in V}  \tr{Q_v^{-1}}$.
\end{theorem}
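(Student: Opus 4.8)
The plan is to recognize Algorithm~\ref{swap.alg} as the classical interchange (local-search) heuristic applied to the set function $f$ of (\ref{fdef.eq}), and then to reduce the stated coherence bound to a purely combinatorial statement about local optima of monotone submodular functions. First I would record the structural facts I get for free: the set $S_s$ returned by Algorithm~\ref{swap.alg} has $|S_s|=k$, and because the algorithm halts only when no beneficial swap remains, $S_s$ is a local optimum, i.e. $\tr{Q_{S_s\cup\{j\}\setminus\{i\}}^{-1}}\ge\tr{Q_{S_s}^{-1}}$, equivalently $f(S_s)\ge f(S_s\cup\{j\}\setminus\{i\})$, for every $i\in S_s$ and $j\in V\setminus S_s$. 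By Proposition~\ref{fnd.prop} and Theorem~\ref{submod.thm}, $f$ is non-decreasing and submodular with $f(\emptyset)=0$, so the whole analysis can be run on $f$.

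Next I would perform the translation that isolates the combinatorial core. Writing $f(S)=C-\tr{Q_S^{-1}}=2B-2H(S)$ and $f(\hat S)=2B-2\Hopt$, where $\hat S$ attains $\Hopt$, a direct substitution shows that (\ref{swapbound.eq}) is equivalent to
\[
f(S_s)\ \ge\ \Big(1-\tfrac{k-1}{2k-1}\Big)f(\hat S)\ =\ \tfrac{k}{2k-1}\,f(\hat S).
\]
Thus it suffices to prove this single inequality for the local optimum $S_s$; the final algebra back to $H$ is routine and mirrors the translation used in Theorem~\ref{greedybound.thm}.

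To establish $f(S_s)\ge\frac{k}{2k-1}f(\hat S)$ I would run an exchange argument, invoking the interchange-heuristic guarantee of~\cite{Nemhauser1978}. Using monotonicity (Proposition~\ref{fnd.prop}) I may assume $|\hat S|=k$. Fix a bijection $\pi$ between $\hat S$ and $S_s$ that fixes $\hat S\cap S_s$ and matches $\hat S\setminus S_s$ with $S_s\setminus\hat S$. For each $o\in\hat S\setminus S_s$ the local-optimality inequality at the swap $S_s-\pi(o)+o$, together with submodularity, bounds the marginal gain of inserting $o$ by the marginal loss of deleting $\pi(o)$; summing these and combining with the submodular upper bound $f(\hat S)\le f(S_s\cup\hat S)\le f(S_s)+\sum_{o\in\hat S\setminus S_s}\rho_o(S_s)$ (where $\rho_o(\cdot)$ denotes a marginal value) yields the claimed ratio.

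The hard part is obtaining the factor $\frac{k}{2k-1}$ rather than the weaker $\frac12$ that a one-to-one exchange gives most directly. Because $\frac{k}{2k-1}>\frac12$, the naive step of relaxing every exchanged marginal to $\rho_o(S_s)$ is too lossy: by diminishing returns, the marginals accrued while telescoping optimal elements onto $S_s$ shrink once earlier optimal elements are already present, and this slack must be retained rather than discarded. The argument therefore has to account for the telescoping over base sets that already contain other optimal elements, which is precisely where the sharper constant comes from. I would also handle two bookkeeping points: the algorithm's strict-inequality stopping rule yields only a \emph{weak} local optimum, which still supplies all the inequalities above; and padding $\hat S$ to size exactly $k$ is justified by monotonicity. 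Checking that the constant cannot be improved would reduce to exhibiting a monotone submodular instance meeting the chain of inequalities with equality.
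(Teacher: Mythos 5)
Your proposal is correct and takes essentially the same route as the paper: identify Algorithm~\ref{swap.alg} as the interchange (swap) heuristic maximizing the monotone submodular function $f$, apply the interchange guarantee of Theorem 5.1 in~\cite{Nemhauser1978} to obtain $f(S_s)\geq\frac{k}{2k-1}f(\hat{S})$, and translate back to $H$ via $f(S)=2B-2H(S)$. Your closing sketch of the exchange argument merely re-derives the cited guarantee and is not needed once that theorem is invoked.
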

\begin{proof}
Following a similar argument to that in the proof of Theorem \ref{greedybound.thm}, it holds that the set $S_s$ produced by Algorithm~\ref{swap.alg} is equivalent to 
what would be produced by a swap algorithm that seeks to maximize $f$. Since $f$ is submodular and non-decreasing, by Theorem 5.1 in \cite{Nemhauser1978}, we have 
\begin{align}
f(S_s) \geq \left(1 - \frac{k-1}{2k-1}\right) f(\hat{S}), \label{swapfbound.eq}
\end{align}
where $\hat{S}$ is any leader set such that $\frac{1}{2}\tr{Q_{\hat{S}}^{-1}}=\Hopt$.
By applying the relationship between $f$ and $H$ to (\ref{swapfbound.eq}), we obtain the bound (\ref{swapbound.eq}).
\end{proof}

As with the greedy algorithm, the bound (\ref{swapfbound.eq}) is tight~\cite{Nemhauser1978}.
The question of whether this bound is tight for Algorithm \ref{swap.alg} remains open.

We note that, for general submodular functions, the bound (\ref{swapfbound.eq}) is worse than the bound (\ref{bound1}).   In other words, if one were to take the output of the greedy algorithm and use it as the input to the swap algorithm, as is done in~\cite{LFJ14}, there is no guarantee that the swap algorithm would improve upon the greedy solution. In fact, there are submodular functions for which it has been shown that this approach yields no improvement over the greedy algorithm alone~\cite{Nemhauser1978}.  However,~\cite{LFJ14} demonstrated that, in practice, the swap algorithm can lead to improvements on the greedy solution.

\section{Numerical Example} \label{numex.sec}
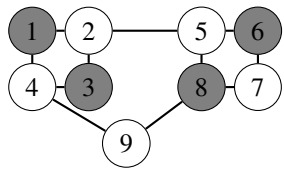
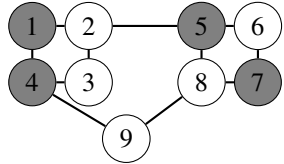
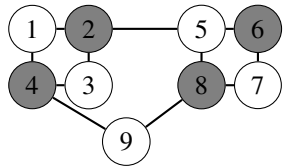
\begin{figure}

\begin{subfigure}[c]{\linewidth}
\centering
\begin{tikzpicture}[scale=0.5]

\node[shape=circle,draw=black, fill=gray] (1) at (0,1.5) {1};
\node[shape=circle,draw=black] (2) at (1.5,1.5) {2};
\node[shape=circle,draw=black, fill=gray] (3) at (1.5,0) {3};
\node[shape=circle,draw=black] (4) at (0,0) {4};
\node[shape=circle,draw=black] (5) at (4.5,1.5) {5};
\node[shape=circle,draw=black, fill=gray] (6) at (6,1.5) {6};
\node[shape=circle,draw=black] (7) at (6,0) {7};
\node[shape=circle,draw=black, fill=gray] (8) at (4.5,0) {8};
\node[shape=circle,draw=black] (9) at (2.5,-1.5) {9};

\foreach \from/\to in {1/2,2/3,3/4,4/1,5/6,6/7,7/8,8/5,8/9, 4/9,2/5}
    \draw[thick] (\from) -- (\to);
    
\end{tikzpicture} 
\caption{Optimal Leader Set.}
\end{subfigure}

\begin{subfigure}[c]{\linewidth}
\vspace{3mm}
\centering
\begin{tikzpicture}[scale=0.5]

\node[shape=circle,draw=black, fill=gray] (1) at (0,1.5) {1};
\node[shape=circle,draw=black] (2) at (1.5,1.5) {2};
\node[shape=circle,draw=black] (3) at (1.5,0) {3};
\node[shape=circle,draw=black, fill=gray] (4) at (0,0) {4};
\node[shape=circle,draw=black, fill=gray] (5) at (4.5,1.5) {5};
\node[shape=circle,draw=black] (6) at (6,1.5) {6};
\node[shape=circle,draw=black, fill=gray] (7) at (6,0) {7};
\node[shape=circle,draw=black] (8) at (4.5,0) {8};
\node[shape=circle,draw=black] (9) at (2.5,-1.5) {9};

\foreach \from/\to in {1/2,2/3,3/4,4/1,5/6,6/7,7/8,8/5,8/9, 4/9,2/5}
    \draw[thick] (\from) -- (\to);

\end{tikzpicture} 
\caption{Leaders selected by Greedy Algorithm.}

\end{subfigure}

\begin{subfigure}[c]{\linewidth}
\vspace{3mm}
\centering
\begin{tikzpicture}[scale=0.5]

\node[shape=circle,draw=black] (1) at (0,1.5) {1};
\node[shape=circle,draw=black, fill=gray] (2) at (1.5,1.5) {2};
\node[shape=circle,draw=black] (3) at (1.5,0) {3};
\node[shape=circle,draw=black, fill=gray] (4) at (0,0) {4};
\node[shape=circle,draw=black] (5) at (4.5,1.5) {5};
\node[shape=circle,draw=black, fill=gray] (6) at (6,1.5) {6};
\node[shape=circle,draw=black] (7) at (6,0) {7};
\node[shape=circle,draw=black, fill=gray] (8) at (4.5,0) {8};
\node[shape=circle,draw=black] (9) at (2.5,-1.5) {9};

\foreach \from/\to in {1/2,2/3,3/4,4/1,5/6,6/7,7/8,8/5,8/9, 4/9,2/5}
    \draw[thick] (\from) -- (\to);

\end{tikzpicture} 
\caption{Leaders selected by Swap Algorithm, with input $S=\{1,2,4,5\}$.}

\end{subfigure}
\caption{Leaders chosen for a graph $G$ by Algorithms~\ref{greedy.alg} and \ref{swap.alg}, with $k=4$. Leaders are shown in gray. }
\label{algEx.fig}
\end{figure}

For the graph $G$, shown in Fig. \ref{algEx.fig}, the greedy algorithm chooses as leaders, in order, nodes $5$, $4$, $7$, and $1$. The coherence of the network is then $H(S)=3.0910$.
For the swap algorithm, given the same $G$ and a randomly chosen starting leader set, $S=\{1,2,4,5\}$, the algorithm performs three rounds of swaps, terminates, and outputs the leader set $S=\{2,4,6,8\}$, with resulting network coherence $H(S)=3.0576$. The optimal leader set is $\hat{S}=\{1,3,6,8\}$, which is distinct from the output of both algorithms. The coherence of the network with optimal leaders is  $\hat{H} = 3$. 

When the output of the greedy algorithm is used as the initial set in the swap algorithm, then the algorithm terminates after two rounds of swaps and outputs $S=\{1,3,5,7\}$, with coherence $H(S)=3.0546$. In this case, the swap algorithm does improve upon the performance of the greedy algorithm, although the result is still sub-optimal.

\balance

\section{Conclusion} \label{conclusion.sec}

We have studied the $k$-leader selection problem in leader-follower consensus networks with noise-corrupted leaders. 
System performance is quantified by the network coherence, which is the  total steady-state variance of the nodes.
We first showed that the network coherence can be expressed as a submodular set function. 
Using this result,  we then derived bounds on the performance of two greedy  leader selection algorithms.
In future work, we plan to extend our analysis to leader-follower consensus in directed graphs. 



%
%
%



%
\bibliographystyle{IEEEtran}
\bibliography{grCons}

\end{document}